\date{\today}
\title{A characterization of Whitney $a$-regular complex analytic stratifications}
\author{Saurabh Trivedi \vspace{-0.8cm}} 
\address{LATP (UMR 7353), Centre de Math\'ematiques et Informatique, Aix-Marseille Universit\'e, 39 rue Joliot-Curie, 13453 Marseille Cedex 13, France.}
\email{saurabh@cmi.univ-mrs.fr}
\subjclass[2010]{Primary 58A35, 57R35; Secondary 32H02, 32Q28, 32S60 }
\keywords{Transversality, Stratifications, Weak topology, Stein manifold, Oka principle, Ellipticity conditions}
\begin{document}
\maketitle
\newtheorem{thm}{Theorem}[section]
\newtheorem{lem}[thm]{Lemma}

\theoremstyle{theorem}
\newtheorem{prop}[thm]{Proposition}
\newtheorem{defn}[thm]{Definition}
\newtheorem{cor}[thm]{Corollary}
\newtheorem{example}[thm]{Example}
\newtheorem{xca}[thm]{Exercise}

\theoremstyle{remark}
\newtheorem{rem}[thm]{Remark}

\numberwithin{equation}{section}

\newenvironment{pf}{\noindent \textbf{Proof:}}{\hfill $\square$\\}
\newcommand{\bb}{\mathbb}
\newcommand{\al}{\mathcal}
\newcommand{\ak}{\mathfrak}
\newcommand{\fs}{\mathscr}

\renewcommand{\thefootnote}{\fnsymbol{footnote}}

\begin{abstract} 
We prove that the openness of the set of maps, between a Stein manifold and an Oka manifold, transverse to a stratification of a complex analytic subvariety in the target implies that the stratification is Whitney $a$-regular. Our result can be seen as a complex version of Trotman's theorem. 
\end{abstract}

\parskip .1cm

\section{Introduction}
A very famous result of Whitney \cite{Whitney} states that any complex analytic variety\footnote{For us a complex analytic variety is always closed.} in a complex manifold can be stratified into complex analytic strata. And, it can be stratified in such a way that the strata satisfy certain regularity conditions, like $a$ and $b$ regularity\footnote{See Mather \cite{Mather3} for definitions of stratifications and regularity conditions $a$ and $b$.} . Similar results were later found to hold for stratifications with semialgebraic, subanalytic strata and more generally with definable stratifications in o-minimal structures. 

In 1965, Feldman \cite{Feldman} proved that the set of maps, between two smooth manifolds, transverse to an $a$-regular smooth stratification in the target manifold is open in the strong topology. And, in 1979 Trotman \cite{Trotman} proved that $a$-regularity is actually necessary and sufficient for the openness of transverse maps. Thus, $a$-regular real stratifications can be characterized as those stratifications for which transverse maps form an open set in the strong topology. 

The result of Trotman does not hold in the complex setting, see Section \ref{Examples}. In this article we prove an analogue of Trotman's result for complex analytic stratifications.  

Let $M$ and $N$ be complex manifolds. Denote by $\al H(M,M)$ the set of all holomorphic maps between $M$ and $N$ with the weak topology.

Our first result is:

\begin{thm}\label{11} Let $M$ be a Stein manifold and $N$ be an Oka manifold\footnote{Recall that a complex manifold $N$ is said to be Oka if holomorphic maps from any Stein manifold to $N$ satisfy the $h$-principle with interpolation on compact sets in the source manifold}. Then, for any stratification $\Sigma$ of a complex analytic subvariety in $N$, the set of maps $\{f \in \al H(M,N) : f \pitchfork \Sigma\}$ is dense in $\al H(M,N)$.
\end{thm}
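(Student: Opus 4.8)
The plan is to reduce the condition $f\pitchfork\Sigma$ to an avoidance condition for the holonomic section $j^1f$ in the holomorphic $1$-jet bundle, and then to get the required perturbations from the holomorphic transversality machinery for maps from a Stein manifold to an Oka manifold. Write $\Sigma=\{S_i\}_i$; being a stratification, the family is locally finite and each $S_i$ is a locally closed complex submanifold of $N$. In the holomorphic $1$-jet bundle $J^1(M,N)$, whose fibre over $(x,y)\in M\times N$ is $\mathrm{Hom}(T_xM,T_yN)$, let $W_i$ be the set of degenerate jets over $S_i$, i.e.\ the triples $(x,y,L)$ with $y\in S_i$ and $L(T_xM)+T_yS_i\neq T_yN$. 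Composing with the projection $T_yN\to T_yN/T_yS_i$ and using that the non-surjective linear maps $\mathbb C^{a}\to\mathbb C^{b}$ form a subvariety of codimension $\max\{a-b+1,0\}$, one sees that $W_i$ is a locally closed complex analytic subset of $J^1(M,N)$ all of whose smooth pieces have complex codimension at least $\dim_{\mathbb C}M+1$. By definition $f\pitchfork S_i$ iff $j^1f(M)\cap W_i=\varnothing$, hence $f\pitchfork\Sigma$ iff $j^1f$ misses $W:=\bigcup_i W_i$; and because every smooth piece of $W$ has codimension strictly larger than $\dim_{\mathbb C}M=\dim_{\mathbb C}j^1f(M)$, a holomorphic map whose $1$-jet is transverse to $W$ in the stratified sense automatically misses $W$. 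So it suffices to make $\{f\in\mathcal H(M,N):j^1f\text{ transverse to }W\}$ dense.

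For the density I would argue by Baire category. Exhaust $M$ by $\mathcal O(M)$-convex compacta $K_1\subset K_2\subset\cdots$ and exhaust each smooth piece of $W$ by compacta, obtaining countably many compact sets $C_n\subset J^1(M,N)$, each closed in $J^1(M,N)$, with $\bigcup_n C_n=W$ and each $C_n$ contained in a single smooth piece $W_{(n)}$ of $W$. Put
\[
R_{j,n}=\{f\in\mathcal H(M,N):\ j^1f\text{ is transverse to }W_{(n)}\text{ at every }x\in K_j\text{ with }j^1f(x)\in C_n\}.
\]
Since $C_n$ is closed and lies in the submanifold $W_{(n)}$, and holomorphic maps close on a slightly larger compact are $C^1$-close on $K_j$ by the Cauchy estimates, each $R_{j,n}$ is open in the weak topology; and $\bigcap_{j,n}R_{j,n}\subseteq\{f:f\pitchfork\Sigma\}$, because a map lying in every $R_{j,n}$ has $j^1f$ transverse to $W$ along all of $M$ and hence, by the codimension bound, disjoint from $W$. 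As $\mathcal H(M,N)$ is completely metrizable, hence a Baire space, it is enough to prove that each $R_{j,n}$ is dense.

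The density of $R_{j,n}$ is the heart of the matter and the only place the Oka hypothesis enters. Given $f_0$ and $\varepsilon>0$, I would build a holomorphic deformation $F\colon M\times B\to N$ with $F(\cdot,0)=f_0$, $B$ a ball about $0$ in some $\mathbb C^{p}$, by pulling back along $f_0$ a dominating spray on $N$ (the \emph{holomorphic flexibility} of the Oka manifold $N$ is precisely what supplies enough sprays) and letting the parameter vary over a finite-dimensional space of global holomorphic sections of the pulled-back bundle over $M$ — finitely many sections suffice because $M$ is Stein, so the $1$-jets at the points of $K_j$ of such sections are generated by finitely many global ones. One arranges the deformation to be \emph{ample to first order}, i.e.\ so that the prolongation $j^1F\colon M\times B\to J^1(M,N)$ (differentiating in the $M$-variable) is transverse to $W_{(n)}$ along $K_j\times\{0\}$; then $(j^1F)^{-1}(W_{(n)})$ is a complex submanifold near $K_j\times\{0\}$, and the holomorphic Sard theorem applied to its projection to $B$ yields parameters $c$ accumulating at $0$ for which $j^1\bigl(F(\cdot,c)\bigr)$ is transverse to $W_{(n)}$ at every $x\in K_j$ with jet in $C_n$. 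Taking $c$ small gives a map arbitrarily $C^0$-close to $f_0$ on $K_j$, as required. This step is a local instance of Forstneri\v{c}'s holomorphic transversality theorem for maps from Stein manifolds to Oka manifolds, which I would cite for the technical details.

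I expect the ampleness to first order to be the real obstacle: one has to perturb the \emph{pair} $(f_0,df_0)$ transversally without leaving the space of global holomorphic maps $M\to N$. A Stein target would make this impossible — holomorphic perturbations are then too rigid — and it is exactly the holomorphic flexibility of the Oka target, via its spray/(sub)ellipticity structure, together with the abundance of global holomorphic sections over the Stein source, that makes the perturbation available; this is where the weight of the argument sits. Everything else is routine bookkeeping: the non-compactness of $M$ is absorbed by the avoidance-of-$W$ reformulation, which is controlled compact-by-compact, together with the Baire argument above — and one genuinely cannot shortcut this by arguing that $\{f:f\pitchfork\Sigma\}$ is open in the weak topology, since by Trotman's phenomenon that already fails for non-$a$-regular $\Sigma$.
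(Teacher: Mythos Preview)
Your argument is correct, but it is organized very differently from the paper's. The paper never leaves the $0$-jet level: for a single stratum $S$ and a compact $K\subset M$, the Oka hypothesis via the $\mathrm{Ell}_1$ property yields a holomorphic $F\colon M\times\mathbb C^n\to N$ with $F(\cdot,0)=f$ which is a \emph{submersion} near $K\times\{0\}$; then $F^{-1}(S)$ is a submanifold, Sard on its projection to the parameter ball gives nearby $f_t\pitchfork_K S$, and the Baire step is carried out stratum by stratum using compact coordinate disks in $N$ that avoid $\partial S$ (this last device replaces the $a$-regularity Forstneri\v c assumed and is the paper's main observation). Your route instead packages ``$f\pitchfork\Sigma$'' as the avoidance condition $j^1f(M)\cap W=\varnothing$ in $J^1(M,N)$, and then appeals to $1$-jet transversality to the smooth pieces $W_{(n)}$; your codimension count $\operatorname{codim}W_i\ge\dim_{\mathbb C}M+1$ is correct and is what turns transversality into avoidance. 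The trade-off is this: the paper's density step is genuinely elementary (a $0$-jet submersion plus Sard), whereas your density step needs the deformation to be ample to \emph{first} order, i.e.\ needs $c\mapsto j^1_x f_c$ to be submersive, which is exactly the content of the holomorphic jet-transversality theorem you cite---so you are invoking a strictly harder statement to prove the $0$-jet case. What your formulation buys in return is uniformity: once $W$ is set up, the same argument proves the jet-level theorem the paper states (without proof) immediately after Theorem~\ref{11}. One caution on your citation: Forstneri\v c's density result for stratified sets assumes $a$-regularity, so be explicit that you are only invoking jet transversality to a \emph{single} submanifold $W_{(n)}$, where no regularity hypothesis is needed.
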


Our main result is:

\begin{thm}\label{12} Let $M$ be a Stein manifold and $N$ be an Oka manifold. Let $\Sigma$ be a stratification of a complex analytic subvariety in $N$. Let $r$ be the minimum of the dimensions of strata in $\Sigma$. If $\dim M = \dim N - r$ and there exists a compact set $K$ in $M$ such that the set of maps $T_K = \{f \in \al H(M,N) : f \pitchfork_K \Sigma\}$ is open in $\al H(M,N)$, then $\Sigma$ is an $a$-regular stratification.
\end{thm}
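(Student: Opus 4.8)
I would argue by contraposition in the classical spirit of Trotman: assuming $\Sigma$ is not $a$-regular, I would manufacture, for any compact $K\subseteq M$ (with $q_0$ chosen in its interior), a map $f\in T_K$ that is a limit in $\mathcal H(M,N)$ of maps \emph{not} transverse to $\Sigma$ along $K$, contradicting the openness of $T_K$. The transcendental ingredient replacing the flexibility of the $C^\infty$ category is the Oka principle, used exactly as in the proof of Theorem \ref{11}: holomorphic maps from the Stein manifold $M$ to the Oka manifold $N$ satisfy the $h$-principle with interpolation on compact (indeed finite) subsets of $M$, so local holomorphic constructions with prescribed jets at finitely many points globalize, and (by the Cauchy estimates) $C^0$-control on a slightly larger compact set controls the weak topology.

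\textbf{Extracting the local data.} Failure of condition $a$ yields a pair of adjacent strata $X$, $Y$ (with $Y\subseteq\overline X\setminus X$), a point $y_0\in Y$, a sequence $p_k\in X$ with $p_k\to y_0$, and a limit plane $\tau=\lim_k T_{p_k}X$ (in the appropriate Grassmann bundle over a coordinate chart) with $T_{y_0}Y\not\subseteq\tau$. Fix holomorphic coordinates centred at $y_0$ and a vector $v\in T_{y_0}Y\setminus\tau$. Here is where the hypothesis $\dim M=\dim N-r$ enters: since $r$ is the minimum of the dimensions of the strata, $m:=\dim M=\dim N-r\ge\operatorname{codim}_N Y$, so a holomorphic map $M\to N$ transverse to $Y$ can genuinely meet $Y$, and meets the minimal strata only in isolated points; equally important, the \emph{equality} (rather than merely an inequality) keeps $m$ from being so large that transversality to $X$ becomes automatic and stable.

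\textbf{The construction.} Build a holomorphic germ at $q_0\in\operatorname{int}K$ with $f(q_0)=y_0$ whose differential $df_{q_0}$ realizes a carefully chosen $m$-plane $P\subseteq T_{y_0}N$: one transverse to $T_{y_0}Y$, containing $v$, and meeting $\tau$ in a subspace of dimension one larger than that of a generic such plane. Extend this germ to a global holomorphic map by the Oka principle, then, keeping the $1$-jet at $q_0$ fixed by interpolation and applying the relative form of the transversality argument underlying Theorem \ref{11}, perturb it to a map $f$ transverse to every stratum of $\Sigma$ along $K$; thus $f\in T_K$. Since $f$ passes through $y_0$ meeting $Y$ transversally and $Y\subseteq\overline X$, it meets $X$ at a sequence $q_k\to q_0$ with $f(q_k)$ near $p_k$ and $df_{q_k}(T_{q_k}M)$ near $P$. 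Because $T_{p_k}X$ is $C^0$-close to $\tau$ and $P$ has the "one extra dimension'' of tangency to $\tau$, the tangent plane of $f$ at $q_k$ is transverse to $T_{f(q_k)}X$ only by a margin $\varepsilon_k\to0$; a local holomorphic tilt of size $O(\varepsilon_k)$ near $q_k$, globalized by Oka with interpolation at $q_k$, then produces $f_k\in\mathcal H(M,N)$ with $f_k\to f$ in the weak topology and $f_k\notin T_K$ (as $q_k\in K$ for $k$ large). This is the desired contradiction. When $r=0$ the plane $P$ is forced to be all of $T_{y_0}N$ and this linear-algebra device degenerates; in that regime the same scheme is run with $f$ carrying a controlled fold over $y_0$ instead, the unstable direction of the fold playing the role of the near-tangency above.

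\textbf{Main obstacle.} The crux is the local step: showing that for a non-$a$-regular pair one can simultaneously arrange $P$ (or the fold normal form, when $r=0$) so that $f$ itself is transverse to $\Sigma$ along $K$ while every neighbourhood of $f$ contains a map non-transverse to $X$, and quantifying that the destabilizing perturbations have size $o(1)$ along the sequence $p_k\to y_0$ — this is exactly the point where $T_{y_0}Y\not\subseteq\tau$ and the dimension equality are both used. By contrast the Oka-theoretic globalization (jet-prescription, interpolation, and the weak-topology bookkeeping via Cauchy estimates) is routine once the relative transversality machinery from the proof of Theorem \ref{11} is available.
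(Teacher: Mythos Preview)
Your contrapositive strategy and the use of Oka--Stein flexibility to globalize a prescribed $1$-jet are exactly what the paper does. The linear-algebraic target is also essentially the same: one wants the tangent $m$-plane at the distinguished point to be transverse to the lower stratum but \emph{not} transverse to the limit plane~$\tau$. (Your phrase ``one extra dimension of tangency to $\tau$'' is equivalent to $P+\tau\neq T_{y_0}N$; note however that your side condition $v\in P$ is incompatible with $P\pitchfork T_{y_0}Y$ when the lower stratum has the minimal dimension $r$, so that particular choice should be dropped.)

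The genuine gap is in how you produce the non-transverse approximants. You want $f$ itself to meet the \emph{higher} stratum $X$ at points $q_k\to q_0$ with $f(q_k)$ near the specific $a$-fault sequence $p_k$, and then tilt near $q_k$. Two things go wrong here. First, nothing forces $f^{-1}(X)$ to accumulate at $q_0$: transversality of $f$ to $Y$ at $q_0$ gives you a local picture of $f^{-1}(Y)$, not of $f^{-1}(\overline X\setminus Y)$. Second, even if such $q_k$ exist, $T_{f(q_k)}X$ is governed by where $f(q_k)$ lands in $X$, and there is no reason this should be close to the particular sequence $p_k$ along which the tangent planes converge to $\tau$; the $a$-fault gives you \emph{one} bad sequence, not a whole bad sector. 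So the ``margin $\varepsilon_k\to 0$'' claim is unjustified.

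The paper sidesteps both issues by perturbing in the opposite way: keep the source point $w$ fixed and move the \emph{value} $f(w)$ onto the given sequence in the higher stratum. Concretely, with $H=D_wf(T_wM)$ chosen (following Trotman) so that $H+T_xX=T_xN$ but $H+\tau\neq T_xN$, one uses a holomorphic perturbation lemma (an Oka-type spray over a fat $\mathcal H(M)$-convex neighbourhood of $w$, globalized by the Oka property of $N$) to obtain $h_n\to f$ with $h_n(w)=y_n$ and $D_wh_n(T_wM)\to H$. Since $T_{y_n}Y\to\tau$ by the very definition of the $a$-fault, for large $n$ one has $D_wh_n(T_wM)+T_{y_n}Y\neq T_{h_n(w)}N$, so $h_n$ fails transversality to the higher stratum \emph{at $w\in K$}. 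This delivers the contradiction without ever needing $f$ to hit the higher stratum, and without the fold workaround you propose for $r=0$.
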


\section{\label{Examples} Examples}

We give examples in the complex case where Trotman's result fails to hold. 

{\bf 1.} Let $M$ be a compact complex manifold. A compact complex manifold is not Stein and any holomorphic map from $M$ to $\bb C^n$ is a constant. Let $V$ be a complex analytic subvariety in $\bb C^n$ and $\Sigma$ a stratification of $V$. Then, a holomorphic map $g_x : M \rightarrow \bb C^n$ which maps all points of $M$ to $x \in \bb C^n$ is transverse to $\Sigma$, at any point $m \in M$ if and only if $x \notin V$.  Thus, even if $\Sigma$ is not $a$-regular, the set of maps which are transverse to $\Sigma$ is open. 

{\bf 2.} A complex manifold $N$ is  `Brody hyperbolic' if there are no non-constant holomorphic maps from $\bb C$ to $N$, see \cite{Coskun} or \cite{Larusson}. On the other hand, complex manifolds which satisfy the Oka property are those manifolds which are the target of `many' non-constant holomorphic maps from $\bb C^n$. Brody hyperbolic manifolds do not satisfy the Oka property, see the discussion in \cite{Larusson}.  Let $M$ be the complex line and $N$ be a polydisk in $\bb C^n$. The manifold $N$ is a Brody hyperbolic manifold because any holomorphic map from $\bb C$ to $N$ must be constant by Liouville's theorem. So, once again a map from $\bb C$ to $N$ is transverse to a stratified set in $N$ if and only if it does not touch the stratified set and so even for non $a$-regular stratifications the set of maps transverse to them is open. Thus the result does not hold in this case. 

{\bf 3} Let $M = \bb C^2$ and $N$ be a complex $2$-hyperbolic manifold\footnote{A complex $k$-hyperbolic manifold $N$ is a manifold of dimension\footnote{All the dimensions considered here are complex dimension} $\geq k$ such that any holomorphic map from $C^k$ to $N$ has rank strictly less than $k$. For example, $\bb CP^3$ - 5 hyperplanes in general position is $2$-hyperbolic.} of dimension $4$. Let $\Sigma = \{X, Y\}$ be a stratification with two strata,  $X$ of dimension $2$ and $Y$ of dimension $3$. 

Suppose $Y$ is not $a$-regular over $X$. Then there exists a sequence $\{y_n\}$ in $Y$, a point $x \in X \cap \overline{Y}$ such that $y_n \rightarrow x$, $T_{y_n} Y \rightarrow \tau$ but $T_xX \notin \tau$. If Trotman's result were to hold in this case, it would be possible to find a sequence of maps $\{f_n\}$ from $M$ to $N$ which converges to $f$ in the weak topology, a point $w \in M$ such that $f_n(w) = y_n$ for large $n$ such that $f_n$ not transverse to $Y$ but $f(w) = x$ and $f$ transverse to $X$.

Since the dimension of $X$ is two, the maps from a two dimensional complex manifolds transverse to $X$ much have rank atleast $2$ to even intersect $X$. But since $N$ is $2$-hyperbolic, the only holomorphic maps from $M$ to $N$ have rank atmost $1$ and thus the only maps which are transverse are those whose image in $N$ does not intersect $X$. Thus Trotman's result does not hold in this case.

\section{Proof of Theorem \ref{11}} 

It is easy to see that $\al H(M,N)$ is a complete metric space (hence Baire). If we treat $M$ and $N$ as smooth manifolds, then the topology on $\al H(M,N)$ is the topology relative to the weak topology on the set of all smooth maps $C^{\infty}(M,N)$. Thus, by a result of Trivedi \cite{Trivedi} (also proved in Forstneri\v c \cite{Forstneric}), we immediately have:

\begin{lem}\label{31} Let $M$ and $N$ be complex manifolds and $\Sigma$ be an $a$-regular stratification of a complex analytic subvariety in $N$. Let $K \subset M$ be a compact subset of $M$. Then, the set
$T_K = \{f \in \al(H,M) : f \pitchfork_K \Sigma\}$
is open in $\al H(M,N)$.
\end{lem}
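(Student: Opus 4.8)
The plan is to obtain the statement by transporting the analogous result in the smooth category across the inclusion $\al H(M,N)\hookrightarrow C^\infty(M,N)$. Concretely, I would proceed in three steps: (i) check that an $a$-regular complex analytic stratification is, after forgetting complex structures, an $a$-regular $C^\infty$ stratification, and that holomorphic transversality along $K$ coincides with smooth transversality along $K$; (ii) quote the smooth openness theorem of Trivedi \cite{Trivedi} (also proved in Forstneri\v c \cite{Forstneric}); (iii) conclude from the fact, recalled just above the lemma, that $\al H(M,N)$ carries the subspace topology inherited from $C^\infty(M,N)$ with the weak topology.

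For step (i): each stratum $S$ of $\Sigma$ is a complex submanifold of $N$, hence a $C^\infty$ submanifold, and the Whitney $a$ condition of a stratum $Y$ over a stratum $X$ --- for every sequence $y_n\to x\in X\cap\overline Y$ with $T_{y_n}Y\to\tau$ one has $T_xX\subseteq\tau$ --- only refers to convergence of tangent spaces in the real Grassmann bundle of $N$; since a sequence of complex subspaces that converges in the complex Grassmannian converges to the same (necessarily complex) limit in the real Grassmannian, and inclusion of complex subspaces is just set-inclusion, the real $a$-regularity condition applied to $\Sigma$ is literally the complex one. Similarly, for $f\in\al H(M,N)$ and $m\in K$, transversality at $m$ to the stratum through $f(m)$ is the condition $\operatorname{Im} df_m + T_{f(m)}S = T_{f(m)}N$, which is insensitive to whether $df_m$ is read as $\bb C$-linear or $\bb R$-linear. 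Hence $\{f\in\al H(M,N): f\pitchfork_K\Sigma\}$ is exactly the restriction to $\al H(M,N)$ of $\widetilde T_K := \{g\in C^\infty(M,N): g\pitchfork_K\Sigma\}$.

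For steps (ii)--(iii): by Trivedi \cite{Trivedi} (resp. Forstneri\v c \cite{Forstneric}), for $C^\infty$ manifolds $M,N$, an $a$-regular $C^\infty$ stratification $\Sigma$, and a compact $K\subset M$, the set $\widetilde T_K$ is open in $C^\infty(M,N)$ for the weak topology --- the point being that, while Feldman's theorem in the strong topology needs global $a$-regularity, transversality imposed only along a compact set is already an open condition in the weak topology. Since the topology on $\al H(M,N)$ is by definition that of a subspace of $(C^\infty(M,N),\text{ weak topology})$, we get $T_K = \widetilde T_K\cap\al H(M,N)$, which is open in $\al H(M,N)$, as claimed. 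The only genuine content is the translation carried out in step (i); I expect this to be the main (indeed the only) obstacle, and it is a matter of bookkeeping --- matching the complex and real notions of $a$-regularity and of transversality along $K$ --- rather than of real difficulty, after which the lemma is immediate, as the paper asserts.
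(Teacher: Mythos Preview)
Your proposal is correct and follows exactly the paper's approach: the paper simply notes that the topology on $\al H(M,N)$ is the subspace topology from the weak topology on $C^{\infty}(M,N)$ and then invokes the smooth openness result of Trivedi \cite{Trivedi} (also in Forstneri\v c \cite{Forstneric}). Your step (i) spells out the compatibility of the complex and real notions of $a$-regularity and transversality that the paper leaves implicit, but otherwise the argument is identical.
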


In the particular case when $\Sigma$ has only one stratum we get the complex version of the openness result of the Thom transversality theorem for the weak topology, i.e. the set of maps transverse to a closed submanifold on a compact subset of the source is open in the weak topology, see Trivedi \cite{Trivedi}. Moreover, if we treat $M$ and $N$ as smooth manifolds, then for any compact coordinate disk  $K$ in $N$ such that $K \cap \partial S = \emptyset$ , the set of maps transverse to $S\cap K$ in $N$ is open, and $S$ need not be closed. Thus again using the fact that the topology on $\al H(M,N)$ to the weak topology on the set of smooth maps $C^{\infty}(M,N)$, we immediately have:

\begin{lem}\label{32} Let $M$ and $N$ be complex manifolds and $S$ be a complex submanifold of $N$. Let $K$ be a compact coordinate disk in $N$ whose intersection with $\partial S$ is empty. Let $K'$ be a compact set in $M$. Then the set $T_{K'} = \{f \in \al H(M,N) : f \pitchfork_{K'} (S \cap K)\}$ is open in $\al H(M,N)$.
\end{lem}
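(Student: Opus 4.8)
The plan is to reduce the statement to the corresponding fact about smooth maps. Recall (as noted just above) that the topology on $\al H(M,N)$ is the subspace topology inherited, via the inclusion $\al H(M,N)\hookrightarrow C^\infty(M,N)$, from the weak ($C^\infty$ compact-open) topology on $C^\infty(M,N)$. Hence, writing $\widetilde T_{K'}=\{f\in C^\infty(M,N): f\pitchfork_{K'}(S\cap K)\}$, we have $T_{K'}=\al H(M,N)\cap\widetilde T_{K'}$, so it is enough to show that $\widetilde T_{K'}$ is open in $C^\infty(M,N)$ for the weak topology; this is precisely the smooth transversality result of Trivedi \cite{Trivedi} recalled above, and holomorphy plays no further role. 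For completeness I indicate why $\widetilde T_{K'}$ is open even though $S$ need not be closed.

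First I would use the hypothesis $K\cap\partial S=\emptyset$ to replace $S$, near $K$, by a level set of submersions. Since $S\cap K=\overline S\cap K$ is compact, cover it by finitely many coordinate balls $U_1,\dots,U_m$ of $N$, chosen inside a neighbourhood of $K$ still disjoint from $\overline S\setminus S$, such that on each $U_i$ the submanifold $S$ is the zero set of a submersion $\varphi_i\colon U_i\to\bb C^{k}$ with $k=\dim N-\dim S$; put $U=\bigcup_i U_i\supset S\cap K$. With the (boundary-free) convention that, for $x\in K'$, transversality of $f$ to $S\cap K$ at $x$ means $df_x(T_xM)+T_{f(x)}S=T_{f(x)}N$ whenever $f(x)\in S\cap K$, this is equivalent to: $d(\varphi_i\circ f)_x$ is surjective for every $i$ with $f(x)\in U_i$, whenever $x\in K'$ and $f(x)\in S\cap K$ (the choice of $i$ is immaterial, since $\ker d(\varphi_i)=TS$ on overlaps).

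Next I would run the usual compactness argument. Fix $f_0\in\widetilde T_{K'}$ and set $A_0=\{x\in K': f_0(x)\in S\cap K\}$, compact in $K'$. Using transversality of $f_0$ along $A_0$, continuity of $x\mapsto f_0(x)$ and $x\mapsto df_{0,x}$, and compactness, one obtains an open neighbourhood $\Omega$ of $A_0$ in $K'$ and a compact neighbourhood $W\subset U$ of $S\cap K$ such that $d(\varphi_i\circ f_0)_x$ is surjective for every $x\in\overline\Omega$ and every $i$ with $f_0(x)\in W\cap U_i$, while $f_0(K'\setminus\Omega)$ stays a definite distance from the closed set $S\cap K$. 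Since surjectivity of a linear map is a $C^1$-open condition, $K'$ is compact, and a basic weak neighbourhood of $f_0$ may prescribe uniform $C^1$-closeness on $K'$, there is a weak neighbourhood $\al V$ of $f_0$ in $C^\infty(M,N)$ such that every $f\in\al V$ is transverse to $S\cap K$ along $K'$: indeed $f(K'\setminus\Omega)$ still misses $S\cap K$, and for $x\in\Omega$ with $f(x)\in S\cap K$ one has $x\in\overline\Omega$ and $f(x)\in W\cap U_i$ for some $i$, so $d(\varphi_i\circ f)_x$ is surjective since it is close to $d(\varphi_i\circ f_0)_x$. Hence $\widetilde T_{K'}$ is open, and intersecting with $\al H(M,N)$ gives the lemma.

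The main obstacle --- the reason this is not literally the openness half of the Thom transversality theorem --- is the non-closedness of $S$: without a hypothesis like $K\cap\partial S=\emptyset$ one could have maps $f_n\to f_0$ and points $x_n\in K'$ with $f_n(x_n)\in S$ but $f_n(x_n)\to\overline S\setminus S$, along which transversality degenerates and openness may fail. Taking $K$ to be a coordinate disk disjoint from $\partial S$ confines all the relevant intersections to a fixed neighbourhood $U\supset K$ on which $S$ is the common zero locus of the fixed submersions $\varphi_i$, so that $S\cap K$ is compact and uniformly bounded away from $\partial S$; the closed-submanifold argument then applies verbatim. Thus the only point that genuinely needs checking is this reduction, the compactness argument above being routine.
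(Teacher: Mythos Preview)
Your proposal is correct and follows the same approach as the paper: reduce to the smooth case via the subspace topology $\al H(M,N)\subset C^\infty(M,N)$, then invoke the smooth openness result (which the paper simply asserts, citing \cite{Trivedi}). Your additional paragraphs spelling out why $\widetilde T_{K'}$ is open despite $S$ not being closed are a careful elaboration of what the paper leaves implicit, and the compactness argument you give is sound.
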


Actually Forstneri\v c \cite{Forstneric} proved that the maps transverse to an $a$-regular stratification is dense. We show that $a$-regularity is not needed for the denseness. The reason why Forstneri\v c needs $a$-regularity is perhaps he did not observe the result of Lemma \ref{32}. We show how one can avoid $a$-regularity in the hypothesis. The idea is similar to that of Forstneri\v c but we will give a detailed proof because the ideas used will be applied later.

A complex manifold $N$ satisfies the \emph{Ellipticity condition}, $Ell_1$, introduced in `Partial Differential Relations' by Gromov \cite{Gromov, Gromov2}, if for every Stein manifold $M$, and every holomorphic map $f :M \rightarrow N$, there exists an integer $n \geq \dim N$ and a holomorphic map $F : M \times \bb C^n \rightarrow N$ such that the map $f = f_0 = F(\cdot,0) : M \rightarrow  N$ and $f^x=F(x,\cdot) : \bb C^n \rightarrow N$ is a submersion at $0 \in \bb C^n$ for each $x \in M$. 

\begin{lem}\label{33} Let $M$ be a Stein manifold and $N$ be an Oka manifold. Let $K$ be a compact subset of $M$ and $S \subset N$ a complex submanifold of $N$. Then the set $T_K = \{f \in \al H(M,N) : f \pitchfork_K S\}$ is dense in $\al H(M,N)$. 
\end{lem}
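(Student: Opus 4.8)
The plan is to deduce the statement from the ellipticity condition $Ell_1$, which every Oka manifold satisfies, combined with the classical parametric transversality theorem. Recall that an Oka manifold $N$ carries a dominating holomorphic spray; given a holomorphic map $f : M \to N$, one pulls the underlying bundle $E \to N$ of the spray back along $f$, and, since $M$ is Stein, Cartan's Theorem A provides finitely many global holomorphic sections generating the pullback bundle, hence a holomorphic surjection $M \times \bb C^n \to f^*E$. Composing with the spray yields a holomorphic map $F : M \times \bb C^n \to N$ with $F(\cdot, 0) = f$ and with $F^x = F(x, \cdot) : \bb C^n \to N$ a submersion at $0$ for every $x \in M$; that is, $N$ satisfies $Ell_1$, and this is the only consequence of the Oka hypothesis I shall use.

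So fix $f \in \al H(M,N)$ and an arbitrary neighborhood $\Omega$ of $f$ in the weak topology; the goal is to exhibit an element of $T_K$ inside $\Omega$. Take $F$ as above. For each $x \in M$ the differential of $F^x$ at $0$ is surjective onto $T_{f(x)}N$, so the set $W$ of points of $M \times \bb C^n$ at which $F$ is a submersion along the $\bb C^n$-directions is open and contains $M \times \{0\}$. On $W$ the map $F$ is a submersion, hence $F|_W$ is trivially transverse to $S$. Since $K$ is compact and $K \times \{0\} \subset W$, there is $\delta > 0$ with $K \times \overline{B(0,\delta)} \subset W$, where $B(0,\delta) \subset \bb C^n$ is the ball of radius $\delta$.

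Now apply the parametric transversality theorem to $F|_W$ and the submanifold $S$: viewing $B(0,\delta)$ as a real parameter space and using Sard's theorem, the set of $t \in B(0,\delta)$ for which $F_t := F(\cdot, t)$ is transverse to $S$ at every $x$ with $(x,t) \in W$ --- in particular at every $x \in K$ --- is the complement of a Lebesgue-null set, hence dense. Choosing such a $t$ with $|t|$ small, we get $F_t \in \al H(M,N)$ with $F_t \pitchfork_K S$, so $F_t \in T_K$. As $t \to 0$ the maps $F_t$ converge to $F_0 = f$ uniformly, together with all derivatives, on every compact subset of $M$ --- uniform continuity of $F$ on $L \times \overline{B(0,\delta)}$ for compact $L \subset M$ handles the values, and the Cauchy estimates handle the derivatives --- so $F_t \in \Omega$ once $|t|$ is sufficiently small. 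Hence $T_K \cap \Omega \neq \emptyset$, and $T_K$ is dense.

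The step I expect to be the main obstacle is the precise application of the parametric transversality theorem in this non-compact, only-partially-defined setting: one must verify that transversality of the total map $F$ on the open set $W$ passes, for Lebesgue-almost every slice parameter $t$, to transversality of $F_t$ on the slice $\{x : (x,t) \in W\} \supseteq K$, and that the exceptional set of $t$ is genuinely negligible, so that an admissible $t$ can be found arbitrarily close to $0$. A secondary point requiring care is checking that $N$ really satisfies $Ell_1$ (existence of a dominating spray on an Oka manifold, and the use of Cartan's Theorem A over the Stein manifold $M$) and that the approximation takes place in the \emph{weak} topology; both are standard, the latter once more via the Cauchy estimates.
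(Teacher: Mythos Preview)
Your proposal is correct and follows essentially the same route as the paper: both arguments invoke the $Ell_1$ property of $N$ to produce the dominating family $F:M\times\bb C^n\to N$, restrict to a product neighbourhood $U\times D$ of $K\times\{0\}$ on which $F$ is a submersion, and then use Sard/parametric transversality to find $t$ arbitrarily close to $0$ with $F_t\pitchfork_K S$. The only cosmetic difference is that the paper unpacks the parametric transversality step explicitly (via regular values of the projection $\pi\colon F^{-1}(S)\cap(U\times D)\to D$), whereas you quote it as a black box and instead supply the spray-plus-Cartan-A justification of $Ell_1$ that the paper merely cites.
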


\begin{proof} Choose a holomorphic map $f \in \al H(M,N)$. Since $M$ is a Stein manifold and $N$ is an Oka manifold, $N$ satisfies the ellipticity condition $Ell_1$ (Proposition 4.6 in Forstneri\v c \cite{Forstneric}). Thus, there exists a map $F : M \times \bb C^n (n \geq \dim N)$ as in the definition of the condition $Ell_1$. Let $\pi : M \times \bb C^n \rightarrow C^n$ be the natural projection. Since $f^z : \bb C^n \rightarrow N$ is a submersion for all $z \in M$, there exist a small ball $D \subset C^n$ around the origin and an open set $U \subset M$ containing $K$ such that $F$ is a submersion from $V = U \times D$ to $N$. Then $S'=F^{-1}(S)\cap V$ is a complex submanifold of $V$. 

Now we show that if $t \in \bb C^n$ is a regular value of the restricted projection $\pi : S' \rightarrow D$ then $f_t \pitchfork_z$ if $(z,t) \in S'$.

If $(z,t) \in S'$ then $y \in f_t(z) \in S$. Since $F(z,t) = y$ and $F$ is a submersion, we know that
$$D_{z,t}F(T_{(z,t)}M \times \bb C^n) + T_yS = T_yN$$
that is, given any vector $a \in T_yN$, there is a vector $b \in T_{(z,t)}(M \times \bb C^n)$ such that 
$$D_{(z,t)}F(b) - a \in T_y S.$$
We want to exhibit a vector $v \in T_z M$ such that $D_zf_t(v)-a \in T_y S$. Now,
$$T_{(z,t)}M \times C^n = T_z M \times T_t\bb C^n,$$
so $b = (w,e)$ for vectors $w \in T_z M$ and $e \in T_t\bb C^n$. If $e=0$, we would be done, for since the restriction of $F$ to $M \times \{t\}$ is $f_t$, it follows that 
$$D_{(z,t)} F(w,0) = D_zf_t(w).$$
If $e \neq 0$, we may use the projection $\pi$ to kill it off. As
$$D_{(z,t)} : T_z M \times T_t\bb C^n \rightarrow T_t\bb C^n$$
is just the projection onto the second factor, and the fact that $t$ is a regular value of the restricted projection $\pi : S'\rightarrow D$, we know that there is some vector of the form $(u,e) \in T_{(z,t)} S$.  But, $F$ maps $S'$ to $S$, so $D_{(z,t)}F(u,e) \in T_y N$. Consequently, the vector $v = w -u \in T_zM$ is our solution, for
$$D_zf_t(v) - a = D_{(z,t)}F((w,e)-(u,e)) - a = (D_{(z,t)}F_{(w,e) -a }) - D_{(z,t)}F(u,e),$$
and both the latter vectors belong to $T_yN$.

By Sard's theorem \cite{Sard}, we see that the set of regular values of $\pi$ is dense in $D$. Choosing $t$ in this dense set and close to $0$ we get maps $f_t:M \rightarrow N$ transverse to $\Sigma$ on $K$ and which approximate $f$ on $K'$. 

Thus we have proved that the set $T_K = \{f \in \al H(M,N) : f \pitchfork_K S\}$ is dense in $\al H(M,N)$. 
\end{proof}

\begin{proof}[Proof of Theorem \ref{11}]

Cover $M$ by a countable collection of compact sets $\{K_{\alpha}\}$ in $M$ and cover each stratum $B_{\beta}$ in $\Sigma$ by a countable collection of compact coordinate disks $\{S_{\beta}^k\}$ of $N$ each of whose intersection with $\partial B_{\beta}$ is empty. By Lemma \ref{32} for every $\alpha$, $\beta$ and $k$, the set
$$T_{\alpha,\beta,i} = \{f \in \al H(M,N) : f \pitchfork_{K_{\alpha}} (B_{\beta} \cap K_i)\}$$  
is open in $\al H(M,N)$. And, by Lemma \ref{33} $T_{\alpha,\beta,i}$ is also dense in $\al H(M,N)$ because it contains the set $T_{\alpha,\beta}=\{f \in \al H(M,N) : f \pitchfork_{K_{\alpha}} B_{\beta}\}$. 

Notice that the set of maps $T=\{f \in \al H(M,N) : f \pitchfork \Sigma\}$ is actually the intersection of all sets of type $T_{\alpha,\beta,i}$. Thus, since $\al H(M,N)$ is a Baire space and the stratification is a locally finite stratification, $T$ is a countable intersection of open dense subsets in $\al H(M,N)$ and hence is itself dense. This proves Theorem \ref{11}.
\end{proof}

In fact a similar argument can be used to prove a more general result. Denote by $J^r(M,N)$ the set of jets of holomorphic maps between $M$ and $N$ and by $j^rf$ the $r$-jet extension of a holomorphic map $f: M \rightarrow N$. Combining the idea of Forstneri\v c and the idea above it is not difficult to see the following result.

\begin{thm} Let $M$ be a Stein manifold and $N$ be an Oka manifold. Let $\Sigma$ be a stratification of a complex analytic subvariety in $J^r(M,N)$. Then the set
of maps $\{f \in \al H(M,N) : j^rf \pitchfork \Sigma\}$ is dense in $\al H(M,N)$.
\end{thm}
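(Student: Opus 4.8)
The plan is to mimic the proof of Theorem \ref{11}, upgrading the target manifold $N$ to the jet space $J^r(M,N)$ and the maps $f$ to their jet extensions $j^rf$. The key structural ingredients are: (1) a parametrized family of holomorphic maps whose jet extensions move the jet $j^rf$ around in $J^r(M,N)$ in a submersive way, and (2) the openness–at–a–compact–set plus Baire category argument already used for Theorem \ref{11}. So first I would cover $M$ by countably many compact sets $\{K_\alpha\}$, stratify-and-cover each stratum $B_\beta$ of $\Sigma \subset J^r(M,N)$ by countably many compact coordinate disks $S_\beta^i$ meeting $\partial B_\beta$ trivially, and define
$$T_{\alpha,\beta,i} = \{f \in \al H(M,N) : j^rf \pitchfork_{K_\alpha} (B_\beta \cap S_\beta^i)\}.$$
The target set $\{f : j^rf \pitchfork \Sigma\}$ is the countable intersection of these, so by Baire it suffices to show each is open and dense in $\al H(M,N)$.

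Openness follows exactly as in Lemma \ref{32}: treating $M$ and $N$ as smooth manifolds, the map $f \mapsto j^rf$ into $C^\infty(M, J^r(M,N))$ is continuous for the weak topologies, and transversality of $j^rf$ to a submanifold (here $B_\beta \cap S_\beta^i$, which is a submanifold of an open chart) on a compact set is a weak-topology-open condition on the jet, hence on $f$; since the holomorphic topology on $\al H(M,N)$ is the restriction of the smooth weak topology, $T_{\alpha,\beta,i}$ is open. For density, the $a$-regularity is again unneeded because each $B_\beta \cap S_\beta^i$ is a single submanifold sitting in a chart, so it is enough to prove the jet analogue of Lemma \ref{33}: for a complex submanifold $\Sigma' \subset J^r(M,N)$, the set $\{f : j^rf \pitchfork_K \Sigma'\}$ is dense. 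Here I would invoke the ellipticity condition $Ell_1$ for the Oka manifold $N$ (Proposition 4.6 in Forstneri\v c \cite{Forstneric}) to get a dominating holomorphic spray $F : M \times \bb C^n \to N$ with $F(\cdot,0) = f$ and $F(x,\cdot)$ submersive at $0$; the point is that the induced family of jet extensions $t \mapsto j^r\big(F(\cdot,t)\big)$ traces out, near a given point $z \in K$, a submersion onto $J^r(M,N)$ in a neighborhood of $j^rf(z)$, because one can prescribe independently the value, and the $r$-jets of the perturbation, by choosing $n$ large and using that Oka/elliptic manifolds admit sprays dominating together with their derivatives (this is where one uses that $F(x,\cdot)$ is submersive, together with the freedom to enlarge $n$ and compose with coordinate perturbations on the $\bb C^n$ factor to control higher-order terms). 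Once this parametrized submersion is in hand, the Sard-theorem argument of Lemma \ref{33} applies verbatim: regular values $t$ of the restricted projection of $F^{-1}(\Sigma')$ (now taken in the jet bundle) give $j^rf_t \pitchfork_K \Sigma'$, and such $t$ are dense near $0$, yielding maps $f_t$ that approximate $f$ on $K$.

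The main obstacle is precisely establishing that the parametrized family $t \mapsto j^r(F(\cdot,t))$ is submersive onto $J^r(M,N)$ at the relevant points — i.e. upgrading the ``$F(x,\cdot)$ is a submersion at $0$'' from $Ell_1$ (which only controls the $0$-jet, the value of the map) to a statement controlling the full $r$-jet. For $r=0$ this is exactly Lemma \ref{33} and nothing extra is needed. For $r\geq 1$ one needs that, after possibly enlarging the parameter dimension, the spray can be chosen so that the derivatives up to order $r$ of $F(\cdot,t)$ at a point, in suitable local coordinates, depend submersively on $t$; this is a standard consequence of the fact that elliptic/Oka manifolds satisfy $Ell_1$ not just for maps but ``with jets'' (one can pull back a spray on $N$ and differentiate, absorbing the $r$-jet data into extra $\bb C^n$-coordinates), and Forstneri\v c's machinery already contains the relevant flexibility. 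Everything else — the covering of $M$ and of the strata, the reduction to a single submanifold in a chart, the openness statement, the Baire-category conclusion, and the Sard-type selection of a good parameter — is a routine transcription of the proofs of Lemmas \ref{32}, \ref{33} and Theorem \ref{11} above, which is why I would only sketch those and concentrate the detail on the jet-level submersivity of the spray.
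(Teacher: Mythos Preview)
Your proposal is correct and matches exactly what the paper indicates: the paper gives no detailed proof for this theorem, saying only that ``combining the idea of Forstneri\v{c} and the idea above it is not difficult to see'' the result, and your sketch carries out precisely that combination --- Forstneri\v{c}'s jet-level spray construction (yielding the parametrized family whose $r$-jet extension is submersive) together with the covering/Baire argument from the proof of Theorem~\ref{11} that dispenses with $a$-regularity via Lemma~\ref{32}. You have also correctly isolated the one nontrivial step beyond Theorem~\ref{11}, namely upgrading the $Ell_1$ spray to one dominating at the level of $r$-jets, and rightly deferred it to Forstneri\v{c}'s machinery.
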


\section{Proof of Theorem \ref{12}}

We need the following well known lemmas about openness of certain subsets of the set of all maps; we include a proof for the sake of completeness.

Treat $M$ and $N$ as $C^1$-manifolds.

\begin{lem} \label{immopen} Let $f \in C^1(M,N)$ be an immersion at some point $x \in M$. Then, there exist coordinate charts $(U,\phi)$ around $x$, $(V,\psi)$ around $f(x)$ with $f(U) \subset V$ and an $\epsilon > 0$, such that for all compact subsets $K \subset U$, every member of $\al N(f,(\phi,U),(\psi,V),K,\epsilon)$\footnote{Set of all maps $g \in C^1(M,N)$ such that $g(K) \subset V$, $||f_{\phi,\psi}(x) - g_{\phi,\psi}(x)|| < \epsilon$ and $||Df_{\phi,\psi}(x) - Dg_{\phi,\psi}(x)|| < \epsilon$ for all $x \in \phi(K)$}  is an immersion at each point of $K$.
\end{lem}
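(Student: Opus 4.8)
The statement is a purely local and elementary fact about the $C^1$ topology, so the plan is to reduce to the model situation in coordinates and then invoke the standard fact that the set of injective linear maps is open. First I would pick coordinate charts $(U_0,\phi)$ around $x$ and $(V,\psi)$ around $f(x)$ with $f(U_0)\subset V$; shrinking $U_0$ if necessary I may assume $\phi(U_0)$ is a ball in $\bb R^m$ (here $m=\dim M$, treating $M$ as a $C^1$-manifold) and $\psi(V)$ an open set in $\bb R^n$. In these charts $f$ is represented by a $C^1$ map $f_{\phi,\psi}\colon \phi(U_0)\to\psi(V)$ whose differential at $\phi(x)$ is injective, i.e. the $n\times m$ Jacobian matrix $Df_{\phi,\psi}(\phi(x))$ has rank $m$.

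The key step is then the following observation: the set $L$ of $n\times m$ matrices of rank $m$ is open in the space of all $n\times m$ matrices (a matrix has rank $m$ iff some $m\times m$ minor is nonzero, and the minors are continuous, indeed polynomial, functions of the entries). Since $p\mapsto Df_{\phi,\psi}(p)$ is continuous and $Df_{\phi,\psi}(\phi(x))\in L$, there is an open neighborhood $W\subset\phi(U_0)$ of $\phi(x)$ and a number $\delta>0$ such that any matrix within distance $\delta$ (in operator norm) of $Df_{\phi,\psi}(p)$, for $p\in W$, still lies in $L$; this uses uniform continuity of $p\mapsto Df_{\phi,\psi}(p)$ on a compact neighborhood of $\phi(x)$ contained in $W$. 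Now set $U=\phi^{-1}(W)$ and take $\epsilon=\delta$. For any compact $K\subset U$ and any $g\in\al N(f,(\phi,U),(\psi,V),K,\epsilon)$ we have, for every $p\in\phi(K)$, that $\|Df_{\phi,\psi}(p)-Dg_{\phi,\psi}(p)\|<\epsilon=\delta$, so $Dg_{\phi,\psi}(p)\in L$, i.e.\ $Dg_{\phi,\psi}$ is injective at $p$; unwinding the coordinate description, $g$ is an immersion at each point of $K$. Note the $\|f_{\phi,\psi}-g_{\phi,\psi}\|<\epsilon$ condition and the requirement $g(K)\subset V$ only serve to guarantee $g_{\phi,\psi}$ is well defined near $\phi(K)$; they play no further role.

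The only mildly delicate point is bookkeeping with the charts: one must shrink $U_0$ down to $U$ so that $\overline U$ (or at least a compact neighborhood of $\phi(x)$) sits inside the region where the operator-norm estimate is uniform, and one must make sure the chart around $f(x)$ is fixed once and for all so that the condition $g(K)\subset V$ makes the comparison $\|Df_{\phi,\psi}-Dg_{\phi,\psi}\|$ meaningful. None of this is a genuine obstacle — it is the routine passage from an open condition on Jacobians at a point to an open condition on a compact set — so I expect no substantive difficulty; the content of the lemma is entirely carried by the openness of the rank-$m$ locus together with the definition of the $C^1$ (weak) topology.
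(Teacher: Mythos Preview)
Your proposal is correct and follows essentially the same route as the paper: both arguments reduce to charts and use that the injective linear maps form an open subset of $L(\bb R^m,\bb R^n)$, together with continuity of $p\mapsto Df_{\phi,\psi}(p)$, to produce a neighbourhood $U$ of $x$ on which the Jacobian stays a positive distance from the non-injective locus. The paper makes this explicit by introducing the continuous function $\eta(u)=\delta(D_{\phi(u)}f_{\phi,\psi},\al I^c)$ and taking $\epsilon$ to be (half of) its minimum, whereas you argue the same openness directly; the content is identical.
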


\begin{pf} Let $(\psi,V)$ be a chart at $f(x)$ and let $(\phi,U')$ be a chart at $x$ such that $f(U') \subset V$. 

Denote by $L(\bb R^m, \bb R^n)$, the set of all linear maps between $\bb R^m$ and $\bb R^n$; it is a normed space and has a well defined metric, say $\delta$. Let $\al{I}^c$ be the set of all non injective maps in $L(\bb R^m, \bb R^n)$, which is a closed subset. 

Put $\al D_{U'} = \{ D_{\phi(u)}f_{\phi,\psi}\in L(\bb R^m,\bb R^n): u \in U'\}$ and define $\eta : U'\rightarrow \bb R$ by $\eta(u) = \delta(D_{\phi(u)} f_{\phi,\psi},\al I^c)$. $\eta$ is a continuous map and since $f$ is an immersion at $x$, $\eta(x) >0$. Thus, there exists an open set $U \subset U'$ around $x$ such that $\eta(y)>0$ for all $y \in U$.

Now, for any compact set $K \subset U$, set $\epsilon = \min\{\eta(y) : y \in K\}$. We claim that the subbasic open neighbourhood of $f$, $\al N(f)= \al N(f,(\phi|_U,U),(\psi, V),K,\epsilon/2)$ has the required property. For, if $g \in \al N(f)$ and $y\in K$, 
\begin{eqnarray*}
 \delta(D_{\phi(y)}f_{\phi,\psi} , D_{\phi(y)} g_{\phi,\psi}) + \delta(D_{\phi(y)} g_{\phi,\psi}, \al I^c) &\geq& \delta(D_{\phi(y)}f_{\phi,\psi} ,\al I^c)\\
\delta(D_{\phi(y)} g_{\phi,\psi}, \al I^c) &\geq& \delta(D_{\phi(y)}f_{\phi,\psi} ,\al I^c)\\
&& - \delta(D_{\phi(y)}f_{\phi,\psi} , D_{\phi(y)} g_{\phi,\psi})\\
\delta(D_{\phi(y)} g_{\phi,\psi}, \al I^c) &\geq &\epsilon - \epsilon/2\\
\delta(D_{\phi(y)} g_{\phi,\psi}, \al I^c)& \geq& \epsilon/2 >0
\end{eqnarray*}
Thus, $g$ is an immersion at $y \in K$.
\end{pf}

\begin{lem} Let $Imm_K(M,N)$ be the set of maps between $M$ and $N$ which are immersions at each point of $K \subset M$. Then $Imm_K(M,N)$ is an open subset of $C^1_W(M,N)$ if $K$ is a compact set.
\end{lem}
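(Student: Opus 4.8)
The plan is to globalize the pointwise statement of Lemma \ref{immopen}. The idea is that $Imm_K(M,N)$ is exactly the set of maps that are immersions at every point of $K$, and Lemma \ref{immopen} already shows that each such point has a neighbourhood (in $M$, and in $C^1_W(M,N)$) witnessing openness locally; a compactness argument stitches these together.

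First I would fix $f \in Imm_K(M,N)$. For each $x \in K$, apply Lemma \ref{immopen} to obtain charts $(U_x,\phi_x)$ around $x$ and $(V_x,\psi_x)$ around $f(x)$ with $f(U_x) \subset V_x$, together with $\epsilon_x > 0$, such that for every compact $L \subset U_x$, every $g \in \al N(f,(\phi_x,U_x),(\psi_x,V_x),L,\epsilon_x)$ is an immersion at each point of $L$. Since $K$ is compact, finitely many of the $U_x$, say $U_{x_1},\dots,U_{x_k}$, cover $K$. Now I would choose compact sets $L_i \subset U_{x_i}$ with $K \subset \bigcup_{i=1}^k L_i$ (shrinking the open cover slightly and taking closures, using that $M$ is locally compact Hausdorff / metrizable). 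The candidate open neighbourhood of $f$ is
$$
\al W = \bigcap_{i=1}^k \al N\bigl(f,(\phi_{x_i},U_{x_i}),(\psi_{x_i},V_{x_i}),L_i,\epsilon_{x_i}\bigr),
$$
a finite intersection of subbasic open sets of $C^1_W(M,N)$, hence open. If $g \in \al W$ and $y \in K$, then $y \in L_i$ for some $i$, and membership of $g$ in the $i$-th factor forces $g$ to be an immersion at $y$ by Lemma \ref{immopen}. Therefore $\al W \subset Imm_K(M,N)$, and since $f$ was arbitrary, $Imm_K(M,N)$ is open.

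The only genuinely delicate point is the passage from the open cover $\{U_{x_i}\}$ of $K$ to a cover by compact subsets $L_i \subset U_{x_i}$; this is a standard shrinking argument valid because $M$, being a manifold, is locally compact, Hausdorff and paracompact (indeed metrizable), so one can find open $W_i$ with $\overline{W_i}$ compact, $\overline{W_i} \subset U_{x_i}$, and $K \subset \bigcup_i W_i$, then set $L_i = \overline{W_i} \cap (\text{a suitable compact nbhd})$ — or more simply $L_i = \overline{W_i \cap K}$. Everything else is bookkeeping: a finite intersection of weak-topology subbasic opens is open, and the pointwise conclusion of Lemma \ref{immopen} applies on each $L_i$ because $L_i$ is a compact subset of the relevant chart domain. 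I would present this concisely, since the substantive content already resides in Lemma \ref{immopen}.
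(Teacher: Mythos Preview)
Your proof is correct and follows essentially the same route as the paper: apply Lemma~\ref{immopen} at each point of $K$, extract a finite subcover by compactness, and intersect the corresponding subbasic weak-topology neighbourhoods. If anything, your version is slightly more careful than the paper's in justifying the passage from the open cover $\{U_{x_i}\}$ to compact subsets $L_i \subset U_{x_i}$ still covering $K$; the paper also takes a common $\epsilon = \min_i \epsilon_{x_i}$, but this is cosmetic.
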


\begin{pf}   
Let $f \in Imm_K(M,N)$. To prove that $ Imm_K(M,N)$ is open, we show that there exists an open neighbourhood of $f$ which is contained in $Imm_K(M,N)$. Since $f$ is an immersion at each $x \in K$, by lemma \ref{immopen} , for each $x \in K$ there exists a chart $U_x$ with the property that for each compact set $K_x \subset U_x$ there is a neighbourhood 
$N(f,(\phi_x, U_x),(\psi_x, V_x),K_x,\epsilon_x)$ such that each member of this neighbourhood is an immersion on all of $K_x$.  Since $K$ is compact, we can choose a finite subcollection $\{U_{x_1},\ldots,U_{x_r}\}$ of the coordinate neighbourhoods $\{U_x\}_{x \in K}$,  such that $K \subset \cup_{i=1}^r K_{x_i}$. But then the intersection 
$$\cap_{i=1}^r N(f,(\phi_{x_i}, U_{x_i}),(\psi_{x_i}, V_{x_i}),K_{x_i},\epsilon)$$ 
($\epsilon = \min\{\epsilon_{x_i}\}$)
is an open neighbourhood of $f$ and is contained in $Imm_K(M,N)$, as required.
\end{pf}

Now using the fact that the topology on $\al H(M,N)$ is the topology relative to the weak topology on $C^1(M,N)$, we immediately have:

\begin{lem} \label{43} Let $M$ and $N$ be complex manifolds and $Imm_K(M,N)$ be the set of all holomorphic maps between $M$ and $N$ which are immersions at each point of $K$. Then, $Imm_K(M,N)$ is open in $\al H(M,N)$ if $K$ is compact.
\end{lem}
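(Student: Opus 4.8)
The plan is to obtain this lemma as an immediate consequence of the preceding one, by identifying the holomorphic set $Imm_K(M,N)$ with the trace on $\al H(M,N)$ of the corresponding set of $C^1$-maps. So the proof will be essentially formal: there is nothing to estimate, only a topological reduction and a small linear-algebra remark to make.

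First I would invoke the fact, recalled just before the statement, that the inclusion $\al H(M,N)\hookrightarrow C^1(M,N)$ is a topological embedding when $C^1(M,N)$ carries the weak topology; consequently the intersection of $\al H(M,N)$ with any open subset of $C^1_W(M,N)$ is open in $\al H(M,N)$. Applying this to the set of $C^1$-maps that are immersions at every point of $K$ — which is open in $C^1_W(M,N)$ by the previous lemma — reduces the whole statement to the set-theoretic identity that a holomorphic map belongs to that $C^1$-set if and only if it is a holomorphic immersion at each point of $K$.

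That identity is the only point needing a word of justification, and it is the ``obstacle'' such as it is: for a holomorphic map $f$ the differential $d_xf\colon T_xM\to T_{f(x)}N$ is $\bb C$-linear; being a holomorphic immersion at $x$ means $d_xf$ is injective over $\bb C$, whereas being a $C^1$-immersion at $x$ in the real sense used in the previous lemmas means $d_xf$ is injective over $\bb R$. Since $\ker d_xf$ is a $\bb C$-linear subspace, it vanishes over $\bb C$ if and only if it vanishes over $\bb R$, so the two notions of immersion coincide for holomorphic maps. Hence $Imm_K(M,N)$ is exactly $\al H(M,N)$ intersected with the open $C^1$-set of the previous lemma, and openness in $\al H(M,N)$ follows at once, with no further work.
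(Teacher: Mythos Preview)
Your proposal is correct and follows exactly the paper's approach: the paper simply notes that the topology on $\al H(M,N)$ is the relative weak topology from $C^1(M,N)$ and deduces the lemma immediately from the preceding $C^1$ openness result. Your added remark that real and complex injectivity of the differential coincide for holomorphic maps is a harmless clarification the paper leaves implicit.
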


Finally, we need a perturbation lemma which allows us to move holomorphic maps between Stein manifolds and Oka manifolds with certain properties. the idea of the proof is suggested by Franc Forstneric.

\begin{lem} \label{44} [Holomorphic perturbation lemma] Let $M$ be a Stein manifold and $N$ be an Oka manifold. Let $\{y_n\}$ be a sequence of point in $M$ converging to a point on $x$ in $N$. Let $f : M \rightarrow N$ be a holomorphic maps which is an immersion at a point $w \in M$ such that $f(w) = n$. Then there exists a sequence of maps $g_n : M \rightarrow N$ such that for large enough $n$, $g_n (w) = y_n$, $D_wf(T_wM)$ converges to $D_wg_n(T_wM)$ and $g_n$ converges to $f$ in the weak topology.
\end{lem}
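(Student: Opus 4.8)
The plan is to realize the $g_n$ as members of a dominating holomorphic spray over $f$, reparametrized so that the point $w$ is carried to the prescribed targets $y_n$.

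Since $M$ is Stein and $N$ is Oka, $N$ satisfies the ellipticity condition $Ell_1$ (Proposition 4.6 in Forstneri\v c \cite{Forstneric}); so there exist an integer $p \geq \dim N$ and a holomorphic map $F : M \times \bb C^p \rightarrow N$ with $F(\cdot,0) = f$ and $F(u,\cdot) : \bb C^p \rightarrow N$ a submersion at $0$ for every $u \in M$. In particular $f^w := F(w,\cdot) : \bb C^p \rightarrow N$ is a holomorphic submersion at $0$ with $f^w(0) = f(w) = x$. By the local normal form for submersions there are an open neighbourhood $W$ of $x$ in $N$ and a holomorphic map $\sigma : W \rightarrow \bb C^p$ with $\sigma(x) = 0$ and $f^w \circ \sigma = \mathrm{id}_W$. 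As $y_n \rightarrow x$, for all large $n$ we have $y_n \in W$; put $t_n := \sigma(y_n) \in \bb C^p$, so that $t_n \rightarrow \sigma(x) = 0$. Define $g_n := F(\cdot,t_n) : M \rightarrow N$ for such $n$ (and, say, $g_n := f$ otherwise).

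Then $g_n(w) = F(w,t_n) = f^w(\sigma(y_n)) = y_n$ for large $n$, which is the first assertion. Since $F$ is holomorphic on all of $M \times \bb C^p$, on any compact $K \subset M$ one may compare $F(\cdot,t_n)$ with $F(\cdot,0)$ using the Cauchy estimates in the $\bb C^p$-variable on $K \times \overline{D}$ for a fixed ball $D$ about $0$ in $\bb C^p$; this bounds $g_n$ and each of its derivatives minus those of $f$ by a constant (depending on $K$) times $\|t_n\|$ on $K$, whence $g_n \rightarrow f$ uniformly on compacta together with all derivatives, i.e. $g_n \rightarrow f$ in the weak topology. This is the second assertion.

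For the tangent planes, fix holomorphic charts about $w \in M$ and about $x \in N$; for large $n$ the point $g_n(w) = y_n$ lies in the latter chart, and there $D_w g_n$ is represented by the derivative matrix of $F(\cdot,t_n)$, which converges to the matrix of $D_w f$ as $t_n \rightarrow 0$. Since $f$ is an immersion at $w$, Lemma \ref{43} applied to the compact set $\{w\}$, together with $g_n \rightarrow f$, shows that $g_n$ is an immersion at $w$ for all large $n$; thus $D_w f(T_w M)$ and $D_w g_n(T_w M)$ are subspaces of complex dimension $\dim M$, and, the passage from a full-rank matrix to its column space being continuous, $D_w g_n(T_w M) \rightarrow D_w f(T_w M)$ in the Grassmannian. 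The step needing the most care is this last one, because the base points $g_n(w) = y_n$ move with $n$, so one must trivialize $TN$ near $x$ even to state the convergence, and then invoke openness of immersions (Lemma \ref{43}) to be sure the limit is a genuine $(\dim M)$-plane rather than a degenerate limit; everything else is immediate from $Ell_1$ and the holomorphy of the spray $F$.
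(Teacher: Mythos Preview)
Your argument is correct and shares the paper's core idea: realize the $g_n$ inside a dominating holomorphic spray over $f$ and choose parameters $t_n\to 0$ so that $g_n(w)=y_n$. The execution differs, however. The paper first builds a \emph{local} spray: it takes a fat $\al H(M)$-convex compact $V\ni w$, covers the graph of $f|_V$ in $M\times N$ by Stein neighbourhoods, produces a spray $f_t:V\to N$ there, and then invokes the Oka property of $N$ (parametric approximation with interpolation) to globalize this spray to all of $M$ while keeping $f_0=f$. You instead use the $Ell_1$ condition---already recorded in the paper via Forstneri\v c's Proposition~4.6---to obtain a \emph{global} dominating spray $F:M\times\bb C^p\to N$ in one stroke, and then pick $t_n=\sigma(y_n)$ via a local holomorphic section of the submersion $F(w,\cdot)$. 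Your route is shorter and cleaner: the exact interpolation $g_n(w)=y_n$ and the weak convergence $g_n\to f$ fall out immediately from holomorphy of $F$ and $t_n\to 0$, with no separate globalization step needed. The paper's route, by contrast, makes the underlying geometry (Stein neighbourhoods of the graph) more explicit and uses the Oka principle in its approximation-with-interpolation form rather than through $Ell_1$; it is closer in spirit to how one would argue if only a local spray were available.
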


\begin{proof} Choose a fat\footnote{A subset V of a topological space is called fat if $V \subset \text{cl}\,({\mathring V})$} $\al H(M)$-convex\footnote{Here $\al H(M)$ denotes the sheaf of holomorphic functions on $M$. Recall that an $\al H(M)$-convex subset of $M$ is a subset $U \subset M$ such that for every $K$ compact in $U$, $\hat K \subset U$ where $\hat K = \{x \in X : |f(x)| \leq \sup_K |f|  \,\,\forall f \in \al H(M)\}$.} set $V$ containing $x$ in the Stein manifold $M$. Consider the part $\{(m,f(m))\in M\times N : m \in V\}$ of the graph  of $f : M \rightarrow N$ in the product $M \times N$. This part of the graph can be covered by Stein neighbourhoods. In this collection of Stein neighbourhood we can certainly find maps with the desired property, i.e., there exists a holomorphic spray of maps $f_t : V \rightarrow N$, with $f = f_0$, depending holomorphically on the parameter $t$ in an open subset of $\bb C^n$ ($n = \dim N$). By varying $t$ we can find maps arbitrarily close to $f$ in the weak topology with the desired properties.

Now, since $N$ is an Oka manifold, this spray of maps over $V$ can be approximated by a spray of global maps over $M$ keeping $f = f_0$ fixed. Thus, giving global maps satisfying the given properties. 
\end{proof}

\begin{proof}[Proof of Theorem \ref{12}] Without loss of generality we can assume that $r \geq 1$. Let $N$ be of dimension $n$. Suppose that the stratification $\Sigma$ is not $a$-regular. Then there exists a pair $(X,Y)$ of strata in $\Sigma$, such that $X \cap \overline Y$ is non empty and $Y$ is not $a$-regular at a point $x \in X \cap \overline Y$. Thus, there exists a sequence $\{y_i\}$ in $Y$ such that $y_i$ tends to $x$ as $i \rightarrow \infty$, $T_{y_i}Y$ tends to $\tau$ as $i \rightarrow \infty$, but $T_xX \not\subset \tau$. Choose $v \in T_xX$ such that $v \not\in \tau$.

By a construction in Trotman \cite{Trotman}, existence of an  `$a$-fault', $x$, also implies that there exists a linear subspace $H$ of $T_x N$ not containing $v$ of dimension $n - r (= \dim M)$   such that 
\begin{equation}
\label{eq41}
H + T_xX  = T_x N
\end{equation}
\begin{equation}
\label{eq42}
H + \tau  \neq  T_x N
\end{equation}

We need to find a map $g : M \rightarrow N$ such that $D_wg(T_wM) = H$. Since $N$ is an Oka manifold, it satisfies the convex approximation property, which says that there exists a holomorphic map $\pi : \bb C^n \rightarrow N$ which is locally biholomorphic such that $f(0) = x$, see Forstneri\v c \cite{Forstneric2}. Choose a point $w \in K$, since $M$ is Stein it can be embedded in some $\bb C^p$ as a closed submanifold such that $w$ is mapped to $0$ under this embedding. 

Identify $T_x N$ by $\bb C^n$ and choose a basis of $T_x N$, $\{v_1,\ldots,v_l,\ldots,v_{n-1},v\}$ such that $\{v_1,\ldots,v_l\}$ span $\tau + H$ and $\{v_1,\ldots,v_{n-r}\}$ span $H$.

Choose a basis $\{u_1,\ldots,u_m,\ldots,u_p\}$ such that $\{u_1,\ldots,u_m\}$ span the vector subspace $T_wM$ of $\bb C^p$. And define a map $L : \bb C^p \rightarrow \bb C^n$ (this map is well defined because $p > n-r = m$) by 
$$L(a_1u_1 + \ldots + a_pu_p) = a_1 v_1 + \cdots + a_{n-r}v_{n-r}.$$
Now take $h = \pi \circ L : \bb C^p \rightarrow N$. The rank of $L$ is $n-r$ and since $\pi$ is a local biholomorphism, the rank of $Dh : \bb C^p \rightarrow T_xN$ is $n-r$ and moreover the image $D_0h(\bb C^p)$ is $H$. 

Now, let $h' : M \rightarrow N$ be obtained by restricting $h$ on $M$ via the embedding of $M$ in $\bb C^p$. Then, since the first $m$ vectors in $\bb C^p$, span the tangent space of $M$ at $w$, it is immediate that $D_wh'(T_wM) = H$ and since the dimension of $M$ is $n-r$, $h$ attains it's maximum rank at $w$ and so it is an immersion at $w$. In fact, the construction of shows that $h'$ is an immersion on whole of $M$. Thus, by equation \ref{eq41} $h'$ is transverse to $\Sigma$ at the point $w$ in $M$.

Thus we have the following situation:

$$
\Diagram
\bb C^p\rdTo^g & \rTo^L  & \bb C^n\\
\uInto^{i} & & \dTo^{\pi} \\
M & \rTo^h & N \\
\endDiagram
$$

Let $\fs D = \{f \in \al H(\bb M, N) : f(w) = x, D_wf(T_wM) = H\}$. This set is not empty since $h'$ belongs to $\fs D$. We show that there exists a map $h : M \rightarrow N$ in $\fs D$, which is transverse to $\Sigma$ on all of $M$. Denote $E_k = \{f \in \al H(M,N) : f|_K \,\, \text{is an immersion}\,\}$. By Lemma \ref{43}, $E_K$ is an open set of $\al H(M,N)$. 
Let $d$ be a metric on $\al H(M,N)$. Then, there exists a $\delta >0$ such that $$B_{\delta}(h') = \{f \in \al H(M,N) : d(h,f) < \delta\}$$ is a subset of $E_K$. Set $\fs E_K = \overline{B_{\delta/2}(h')} \cap \fs D$ (this set is not empty because $h' \in \fs E_K$) and moreover it is a closed subset of $\al H(M,N)$, since any converging sequence in $\fs E_K$ will converge to a point in $\fs E_K$ as all maps of the sequence are immersions on $K$. Since closed subsets of complete metric spaces are also complete metric spaces with the relative topology, we deduce that $\fs E_K$ is a Baire space. 

Using an argument similar to Theorem \ref{11}, we can show that the set $\{f \in \fs E_k : f \pitchfork \Sigma\}$ is dense in $\fs E_k$, which proves the existence of a map $h : M \rightarrow N$ which is transverse to $\Sigma$ on whole of $M$. 

Once we have $h$, we can now construct a sequence of maps $h_n : M \rightarrow N$, using Lemma \ref{44}, such that for large enough $n$, $h_n(w)=y_n$, $D_wh_n(T_wM)$ converges to $D_w h(T_wM) = H$. Moreover for large $n$ since $T_{y_n}Y$ converges to $\tau$, by \ref{eq42}, $h_n$ is not transverse to $Y$ at $w\in M$. This proves the existence of a sequence of holomorphic maps $h_n: M \rightarrow N$ not transverse to $\Sigma$ but whose limit is transverse to $\Sigma$, which implies that $T_K$ is not open giving a contradiction.
\end{proof}

Finally we remark that almost similar idea can be used to prove an analogue of our result in the case of algebraic manifolds and algebraic maps. We don't know if our result holds for a larger set of complex manifolds and it should be investigated.

\bibliographystyle{amsplain}
\bibliography{../mainbibliography}

\end{document}